\documentclass[12pt]{amsart}
\usepackage[utf8]{inputenc}

\usepackage{amscd}

\usepackage[all]{xy}
\usepackage{color}



\setcounter{secnumdepth}{2}
\setcounter{tocdepth}{2}

\numberwithin{equation}{section}

\setlength{\parindent}{.15in}
\setlength{\textwidth}{6in}
\setlength{\oddsidemargin}{.15in}
\setlength{\evensidemargin}{.15in}
\setlength{\parskip}{\smallskipamount}

\newtheorem{theorem}[equation]{Theorem}
\newtheorem*{theorem*}{Theorem}

\newtheorem{proposition}[equation]{Proposition}

\newtheorem{lemma}[equation]{Lemma}

\newtheorem{corollary}[equation]{Corollary}

\theoremstyle{remark}
\newtheorem{remark}[equation]{Remark}

\theoremstyle{definition}




\def\XXint#1#2#3{{\setbox0=\hbox{$#1{#2#3}{\int}$}
	\vcenter{\hbox{$#2#3$}}\kern-.5\wd0}}


\newcommand{\N}{\mathbb N}

\newcommand{\R}{\mathbb R}

\newcommand{\norm}[1]{\left\Vert#1\right\Vert}

\def\eps{\epsilon}
\def\half{{1 \over 2}}





\hyphenation{Lip-schitz}
\hyphenation{neigh-bor-hood}

\begin{document}

\title[Isometric embeddings of snowflakes] 
{Isometric embeddings of snowflakes into finite-dimensional Banach spaces}

\author{Enrico Le Donne}

\author{Tapio Rajala}
\author{Erik Walsberg}

\address[Le Donne, Rajala]{University of Jyvaskyla\\
         Department of Mathematics and Statistics \\
         P.O. Box 35 (MaD) \\
         FI-40014 University of Jyvaskyla \\
         Finland}
\email{enrico.e.ledonne@jyu.fi}
\email{tapio.m.rajala@jyu.fi}

\address[Walsberg]{Department of Mathematics\\ University of Illinois at Urbana-Champaign \\1409 West Green Street\\ Urbana\\ Il\\ 61802\\ USA}
\email{erikw@illinois.edu}

\thanks{E.L.D. and T.R. acknowledge the support of the Academy of Finland, projects no. 288501 and 274372. 
E.W acknowledges the support of the European Research Council under the European Union's Seventh Framework Programme (FP7/2007-2013) / ERC Grant agreement no.\ 291111/ MODAG}


\subjclass[2010]{
30L05; 
46B85; 
54C25; 
54E40; 
28A80. 
}

\date{September 12, 2016}

\begin{abstract}

%

We   consider a  general notion of snowflake  of a metric space by composing the distance  by a nontrivial concave function. 
 We prove that 
a snowflake of a metric space $X$   isometrically embeds into some finite-dimensional normed space if and only if $X$ is finite.
In the case of 
  power functions
we give a uniform bound on the cardinality of $X$ depending only on
the power exponent and the dimension of the vector space.

\end{abstract}

\maketitle


\section{Introduction}
Isometric embeddings of metric spaces into infinite-dimensional Banach spaces have a long tradition. 
Classical  results are due to
Fr\'echet, Urysohn,  Kuratowski,  Banach,
 \cite{
  Frechet1910,  
 Urysohn1927,    
Kuratowski1935,  
Banach1955}. 
For an introduction to the subject we refer to Heinonen's survey \cite{MR2014506}.
The case of  embeddings into
finite-dimensional Banach spaces is much harder, even when one considers bi-Lipschitz embeddings in place of isometric embeddings.
It is a wide open problem to give an intrinsic characterizations of those metric spaces which admit bi-Lipschitz embeddings into some Euclidean space. See for example
\cite{MR1744622,    
MR1387955,    
Lang-Plaut,    
MR2915474,    
MR3253787}.  

The situation is quite different for quasisymmetric maps (see \cite[Chapter~10-12]{Heinonenbook} for an introduction to the theory of quasisymmetric embeddings).
A metric space quasisymmetrically embeds into some Euclidean spaces if and only if it is doubling (see \cite[Theorem~12.1]{Heinonenbook}).
More specifically, Assouad proved the following result (see \cite{Assouad83}, and also \cite{MR2990137, MR3108866}):
if $(X,d)$ is a doubling metric space and $\alpha\in (0,1)$ then the metric space $(X,d^\alpha)$ admits a bi-Lipschitz embedding into some Euclidean space.
If $d_E$ is the Euclidean distance and $\alpha=\log 2 / \log 3$ then the metric space
$([0,1], d_E^\alpha )$ is bi-Lipschitz equivalent to the von Koch snowflake curve, so
 $(X,d^\alpha)$  is said to be the {\em $\alpha$-snowflake} of  $(X,d)$.

The Assouad Embedding Theorem  is sharp in that there 
  are examples (none of which are trivial) of doubling spaces which do not admit bi-Lipschiz embeddings into any Euclidean space,
  even though each of their $\alpha$-snowflakes do.
See  
\cite{
Semmes, 
Semmes2,   
Laakso,   
Cheeger-Kleiner2}. 
We also  stress that it has been known that snowflakes of doubling   spaces
in general do not isometrically  embed  in any Euclidean space.
Indeed,  the space
  $([0,1], d_E^{1/2})$ does not, see
  \cite[Remark~3.16(b)]{MR2014506}.
  
  The main aim of this paper
is to show that if some $\alpha$-snowflake of a metric space
isometrically embedds into a finite dimensional Banach space then the
metric space in question is finite.
  Our main result is the following.
  
\begin{theorem}\label{thm:main}
For any $n \in \N$ and $\alpha \in (0, 1)$ there is an $N \in \N$ such that if a metric space $(X,d)$ has cardinality at least $N$ then    $(X,d^\alpha)$ does not admit an isometric embedding into any $n$-dimensional normed linear space.
\end{theorem}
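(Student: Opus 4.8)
The plan is to reformulate the non-embedding statement as a purely geometric finiteness statement about point configurations in the target space, and then to bound the number of points by a volume/packing argument of Danzer--Gr\"unbaum type.

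First, set $\beta = 1/\alpha \in (1,\infty)$ and let $V$ be the $n$-dimensional normed space. An isometric embedding $f\colon (X,d^\alpha)\to V$ is automatically injective, so it identifies $X$ with a subset $Y=f(X)\subseteq V$ of the same cardinality; and since $d(x,y)=\|f(x)-f(y)\|^{\beta}$, the fact that $d$ is a metric amounts exactly to the requirement that $Y$ satisfy the ``$\beta$-triangle inequality''
\[
\|p-r\|^{\beta}\ \le\ \|p-q\|^{\beta}+\|q-r\|^{\beta}\qquad\text{for all }p,q,r\in Y .
\]
Conversely any such $Y$ arises this way, so it suffices to prove: for every $n$ and every $\beta>1$ there is $N=N(n,\beta)$ bounding the cardinality of any $Y\subseteq V$ obeying this inequality. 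This inequality is invariant under translations and dilations of $V$, which is what will let the argument cope with point sets spread over many scales. I would first record the model case: when $V$ is Euclidean and $\beta=2$ the inequality says precisely that every angle determined by three points of $Y$ is non-obtuse, so the classical Danzer--Gr\"unbaum theorem gives $\abs{Y}\le 2^{n}$. The task is to find a quantitative analogue for an arbitrary norm and arbitrary exponent.

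Next I would run the Danzer--Gr\"unbaum volume scheme. Assume $Y$ finite and put $K=\operatorname{conv}(Y)$. For a ratio $\lambda=\lambda(\beta)\in(0,1)$ to be chosen, consider the homothetic copies
\[
H_i \;=\; (1-\lambda)\,p_i + \lambda K ,\qquad p_i\in Y .
\]
Each $H_i\subseteq K$ by convexity, and $\vol(H_i)=\lambda^{\,n}\,\vol(K)$. If the interiors of the $H_i$ are pairwise disjoint, then summing volumes inside $K$ gives $\abs{Y}\,\lambda^{\,n}\,\vol(K)\le\vol(K)$, hence
\[
\abs{Y}\ \le\ \lambda^{-n}\ =:\ N(n,\beta),
\]
a bound of the required form. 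Crucially, the $H_i$ all have the same volume no matter how the $p_i$ are distributed, so this step is insensitive to the clustering of points at widely different scales, and the whole difficulty is concentrated in the disjointness claim.

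The disjointness of the interiors is the heart of the matter and the step I expect to be hardest. In the Euclidean case one invokes orthogonality, which is unavailable here; instead the separation must be extracted directly from the strict form of the $\beta$-triangle inequality. Concretely, an overlap of the interiors of $H_i$ and $H_j$ produces a point of $K$, hence a convex combination of the $p_k$, lying too close to the segment $[p_i,p_j]$, and I would feed the resulting nearly-collinear triple into the inequality to reach a contradiction once $\lambda$ is small enough. To make $\lambda$ depend only on $\beta$, and not on the particular norm or configuration, I would argue by compactness: after normalizing by a dilation and passing to the compact unit sphere of $V$, the strict inequalities $\|p-r\|^{\beta}<\|p-q\|^{\beta}+\|q-r\|^{\beta}$ valid for non-degenerate triples upgrade to a uniform gap, and it is this gap that pins down a single admissible ratio $\lambda(\beta)$. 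The degeneration of the gap as $\beta\downarrow 1$ forces $\lambda(\beta)\to 0$ and hence $N(n,\beta)\to\infty$, consistent with the fact that as $\alpha\uparrow 1$ the snowflake degenerates to the original metric and no finite cardinality bound can hold.
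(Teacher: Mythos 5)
Your reduction to the ``$\beta$-triangle inequality'' $\|p-r\|^{\beta}\le\|p-q\|^{\beta}+\|q-r\|^{\beta}$ with $\beta=1/\alpha$ is correct, and your model case is genuinely right: for $\beta\ge 2$ in Euclidean space one has $b^{2}\le(a^{\beta}+c^{\beta})^{2/\beta}\le a^{2}+c^{2}$, so all angles are non-obtuse and Danzer--Gr\"unbaum applies. The gap is in the heart of your argument: the claim that for some $\lambda=\lambda(n,\beta)>0$ the homothets $H_i=(1-\lambda)p_i+\lambda K$ have pairwise disjoint interiors is \emph{false} for every $\beta\in(1,2)$, i.e.\ for every $\alpha\in(1/2,1)$, and no choice of $\lambda$ depending only on $n$ and $\beta$ can repair it. Take in the Euclidean plane $p_1=(0,0)$, $p_2=(c,0)$, $q_1=(-R,H)$, $q_2=(R+c,H)$, and set $A=\sqrt{R^{2}+H^{2}}$, $B=\sqrt{(R+c)^{2}+H^{2}}$, $D=2R+c$. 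For $H$ large the only nontrivial instance of the $\beta$-inequality is $B^{\beta}\le A^{\beta}+c^{\beta}$, and since $B^{\beta}-A^{\beta}\le \beta B^{\beta-1}(B-A)\sim \tfrac{\beta}{2}(2Rc+c^{2})H^{\beta-2}\to 0$ when $\beta<2$, this holds once $H$ is large enough (for $\beta=2$ it fails, which is why Danzer--Gr\"unbaum survives there). So this four-point set is a genuine $\alpha$-snowflake in $\R^{2}$. But $K-K$ contains $q_2-q_1=(2R+c,0)$, a large multiple of $p_2-p_1=(c,0)$, and its interior contains every $(x,0)$ with $|x|<2R+c$; since $\operatorname{int}H_1\cap\operatorname{int}H_2\neq\emptyset$ exactly when $\tfrac{1-\lambda}{\lambda}(p_2-p_1)\in\operatorname{int}(K-K)$, the homothets at the two close points overlap as soon as $R>c(1-2\lambda)/(2\lambda)$. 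The volume count therefore never gets off the ground.

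The suggested repair --- that an overlap yields a nearly collinear triple to feed into the strict inequality --- also has nothing to bite on: in the example above the overlap is produced by the \emph{pair} $q_1,q_2$ whose difference is parallel to $p_1-p_2$, while every angle formed by the four points stays bounded away from $\pi$ (the angles at $q_1,q_2$ subtended by $p_1,p_2$ tend to $0$, the remaining ones to $0$ or $\pi/2$). This pinpoints the real content of the snowflake condition: it forbids angles \emph{close to $\pi$} (quantitatively, in the inner product supplied by John's theorem), not obtuse angles, and that weaker conclusion is incompatible with a packing-by-homothets argument but is exactly enough for the Ramsey-theoretic route the paper takes (Lemma~\ref{lma:angles}: sufficiently many points in $\R^{n}$ must contain a triple with angle $\ge\pi-\theta$, which is then shown to violate the triangle inequality of $d$ after un-snowflaking). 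Your compactness step for a uniform $\lambda$ is also delicate --- the space of triples modulo dilation is not compact, since side-length ratios degenerate --- but the counterexample above is the decisive obstruction. As written, the approach can only be salvaged in the regime $\alpha\le 1/2$ with a Euclidean target.
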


The techniques that we use in Section~\ref{sec:thm:main1} for the proof of Theorem~\ref{thm:main} can also be used to study more general notions of snowflakes.
For this purpose, we introduce general   snowflaking functions.
We say that a function $h: \mathbb{R}_{\geq} \to \mathbb{R}_{\geq}$ is a  {\em snowflaking function} if the following hold:
\begin{enumerate}
\item[(S1)] $h(0) = 0$.
\item[(S2)] $h$ is concave.
\item[(S3)] $ \frac{h(t)}{t} \to \infty$, as ${t \to 0}$.
\item[(S4)] $ \frac{h(t)}{t} \to 0$, as ${t \to \infty} $.
\end{enumerate}
Let $h$ be a snowflaking function.
Then function $h$ is  weakly increasing and, if $d$ is a metric on a set $X$ then $h \circ d$ is also a metric on $X$.
Given a snowflaking function $h$ and a metric space $(X,d)$ 
we say that the metric space $(X, h \circ d)$ is the  {\em $h$-snowflake} of $(X,d)$.
If $h(t) = t^\alpha$ for some $\alpha \in (0,1)$ then $(X, h \circ d)$ is the $\alpha$-snowflake of $(X,d)$.
In Section~\ref{sec:thm:main} we prove the following.
\begin{theorem}\label{thm:main1}
Let $h$ be a snowflaking function and $(X,d)$ a metric space.
If the $h$-snowflake of $(X,d)$  admits an isometric embedding into some finite-dimensional Banach space then $X$ is finite.
\end{theorem}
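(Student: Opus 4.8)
The plan is to argue by contradiction: suppose $X$ is infinite and $f\colon (X,h\circ d)\to V$ is an isometric embedding with $\dim_{\R} V=n<\infty$, and write $\rho=h\circ d$. I would first extract from the infinite image $f(X)\subseteq V$ a well-structured sequence on which the snowflaking effect is visible. Since $V$ is finite-dimensional it has the Heine--Borel property, so I split into two cases according to whether $f(X)$ is bounded. If $f(X)$ is bounded it has an accumulation point, giving distinct $x_k$ with $\rho(x_k,x_l)\to 0$ and hence, since $h$ is continuous with $h(0)=0$, with $d(x_k,x_l)\to 0$; here the small-scale hypothesis (S3) does the work. If $f(X)$ is unbounded I extract points escaping to infinity in $\rho$, hence in $d$, and the large-scale hypothesis (S4) plays the symmetric role. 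In either case, after fixing a base point $x_0$ and passing to a subsequence, compactness of the unit sphere of $V$ lets me assume that the normalized increments $u_k:=\big(f(x_k)-f(x_0)\big)/h(r_k)$, with $r_k:=d(x_0,x_k)$, converge to a common unit vector $u$.

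The key local computation turns near-parallelism of the increments into a near-equality in the subadditivity of $h$. Indeed, if the directions of $f(x_i)-f(x_0)$ and $f(x_j)-f(x_0)$ both lie within angle $\eps$ of $u$ and $r_i>r_j$, then $\|f(x_i)-f(x_j)\|$ differs from $h(r_i)-h(r_j)$ by at most $\eps\big(h(r_i)+h(r_j)\big)$. Combining this with the isometry identity $\|f(x_i)-f(x_j)\|=h(d(x_i,x_j))$ and the triangle inequality $d(x_i,x_j)\ge r_i-r_j$ yields $h(r_i-r_j)\le h(r_i)-h(r_j)+\eps\big(h(r_i)+h(r_j)\big)$; that is, the nonnegative subadditivity gap $G_{ij}:=h(r_i-r_j)+h(r_j)-h(r_i)$ is forced to be small. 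The point of (S3) is that at small scales $h(t)/t\to\infty$ makes $h$ quantitatively super-additive below, so these gaps cannot all be small; at large scales (S4) provides the analogous genuine concavity.

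To turn this into a contradiction I would sum the estimates over many scales. Telescoping the consecutive gaps gives $\sum_{i=1}^{m-1} h(r_i-r_{i+1}) \le h(r_1)-h(r_m)+\eps\sum_{i}\big(h(r_i)+h(r_{i+1})\big)$, while subadditivity of $h$ gives the reverse bound $\sum_{i=1}^{m-1} h(r_i-r_{i+1})\ge h(r_1)-h(r_m)$. The dimension $n$ enters through the pigeonhole step that produces the common direction $u$ shared by many increments: the more points one wishes to align to within a fixed angle, the larger the cardinality of $X$ one must start from, and this is exactly where the threshold acquires its dependence on $n$. For the power functions $h(t)=t^{\alpha}$ the whole configuration is scale-invariant, so the subadditivity gap is a fixed proportion of $h(r_1)$ at every scale and the pigeonhole count depends only on $n$; balancing the two gives an explicit threshold $N(n,\alpha)$, which is the content of Theorem~\ref{thm:main}.

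The main obstacle is making the multi-scale synthesis quantitative for a general snowflaking function. For slowly varying $h$, such as $h(t)=t\log(1/t)$, and for sparse spaces, such as $X=\{2^{-k}\}$, the per-scale subadditivity gap degenerates to zero in the relevant limit, so no argument working one scale at a time can succeed: the contradiction has to be extracted globally, by coordinating many points across many scales and carefully balancing the cumulative subadditivity gap coming from (S3) and (S4) against the angular defect $\eps$ tolerated by the pigeonhole. Controlling this balance --- in particular guaranteeing that enough genuinely distinct scales contribute, rather than the configuration collapsing into a single near-linear regime --- is the crux, and is the step I expect to require the bulk of the technical work.
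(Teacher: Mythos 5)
Your setup---arguing by contradiction, splitting into the cases where $f(X)$ is unbounded versus has an accumulation point, and assigning (S4) to the first and (S3) to the second---matches the paper exactly. But the quantitative core of the argument is both missing and, as designed, unworkable. Your alignment estimate bounds the subadditivity gap $G_{ij}=h(r_i-r_j)+h(r_j)-h(r_i)$ by $\epsilon\bigl(h(r_i)+h(r_j)\bigr)$, an error proportional to the \emph{larger} scale $h(r_i)$. The superadditivity defect that (S4) can supply is only of order the \emph{smaller} scale: choosing $r_i$ so large that $h(r_i)\le h(r_i-r_j)+\tfrac12h(r_j)$ gives $G_{ij}\ge\tfrac12 h(r_j)$ and (by concavity) never more than $h(r_j)$. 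Since the very sparseness needed to activate (S4) lets $h(r_i)/h(r_j)$ be unbounded along your sequence, the resulting inequality $\tfrac12 h(r_j)\le\epsilon\bigl(h(r_i)+h(r_j)\bigr)$ is satisfiable for every fixed $\epsilon$ and yields no contradiction; in the accumulation case the mismatch is even starker, as the gain is of order $h(r_i-r_j)\to 0$ while the error is $\epsilon$ times a fixed quantity. The telescoped version inherits the same defect (your ``reverse bound'' $\sum h(r_i-r_{i+1})\ge h(r_1)-h(r_m)$ is consistent with, not contradictory to, the summed upper bound). You correctly flag this balancing act as ``the crux'' requiring ``the bulk of the technical work,'' but that crux is the theorem, so what you have is an incomplete plan with a real obstruction rather than a proof.

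The paper escapes this trap by abandoning the two-point radial scheme. It builds the sparse (resp.\ accumulating) sequence much as you do, with a gauge function $T$ extracted from (S4) (resp.\ (S3)) guaranteeing $h(S+t)\le h(S)+\tfrac12 h(t)$ at the relevant scale pairs, but then invokes a Ramsey-type fact (Lemma~\ref{lma:angles}): among sufficiently many points of an $n$-dimensional space some triple $x,y,z$ satisfies $\angle_z(x,y)>\pi-\delta$. For such a near-collinear triple the John-ellipsoid lemmas (Lemma~\ref{lma:closetoEuclidean} and Lemma~\ref{lemma234124}) give $\|x-y\|\ge\|x-z\|+\|z-y\|-C(1+K)\,\delta\,\|x-z\|$, where the error is now proportional to the \emph{short} side $\|x-z\|=h(d(x_i,x_j))$; taking $\delta<\tfrac{1}{2C(1+K)}$ it is absorbed by the $\tfrac12 h(d(x_i,x_j))$ defect, and a single triple already contradicts the triangle inequality for $d$. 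To salvage your approach you would need a configuration in which the angular error is controlled by the smaller of the two distances involved; that is exactly what measuring straightness at the large-angle vertex accomplishes.
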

\begin{remark}
Note that for general snowflaking functions there may not be any bound on the number of points one can embed, see Remark \ref{rmk:nobound}.
If one removes either of the requirements (S3) or (S4), then we say that $(X, h \circ d)$ is a degenerate snowflake (at zero or at infinity, respectively). Indeed, in such cases the conclusion of Theorem~\ref{thm:main1} does not hold, in general, see Proposition \ref{prop:necessity}.
\end{remark}
 
We conclude the introduction with few other simple observations about embeddings into Euclidean spaces.
Every $\alpha$-snowflake of $\mathbb{R}^n$, $\alpha\in (0,1)$, isometrically embeds
 into the Hilbert space $\ell^2$ of square summable sequences, see \cite[Remark~3.16(d)]{MR2014506}.
 For any $n\in \N$, there is a metric space of cardinality $n$ such that for any $\alpha \in (0,1)$ its $\alpha$-snowflake 
can be isometrically embedded into $\mathbb{R}^{n-1}$ (just take the vertices of the standard simplex).
There is a 4-point metric space which has an $\alpha$-snowflake which cannot be isometrically embedded into $\mathbb{R}^4$, 
and so cannot be isometrically embedded into any Euclidean space, (just take the vertices of the (3,1) complete bipartite graph).
Every finite metric space has an $\alpha$-snowflake which admits an isometric embedding into some Euclidean space, see Proposition~\ref{prop:existence} below.

\section{Corollaries}

%
%
%

In this section we record a few small results and corollaries.
First of all, note that as a corollary of Theorem \ref{thm:main1},  by Ascoli-Arzel\`a Theorem we immediately obtain:
\begin{corollary}\label{lipschitz}
Suppose that $(X,d)$ is infinite.
For any snowflaking function $h$ and $n \in \mathbb{N}$ there is a $\delta > 0$ such that $(X, h \circ d)$ does not admit a $(1 + \delta)$-bilipschitz embedding into any normed linear space of dimension $n$.
\end{corollary}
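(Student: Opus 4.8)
The plan is to argue by contradiction and to manufacture, as a limit, an honest isometric embedding of an infinite space, thereby contradicting Theorem~\ref{thm:main1}. Suppose the conclusion fails for some infinite $(X,d)$, some snowflaking function $h$ and some $n$. Then for every $k\in\N$ there is a $(1+\tfrac1k)$-bilipschitz embedding $f_k$ of the $h$-snowflake $(X,h\circ d)$ into some $n$-dimensional normed space. Since every infinite set contains a countably infinite subset, and the $h$-snowflake of any infinite subspace is again a space to which Theorem~\ref{thm:main1} applies, I would fix once and for all a countably infinite subset $\{x_0,x_1,x_2,\dots\}\subseteq X$ and only track the $f_k$ on this subset. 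Composing each $f_k$ with a translation I arrange $f_k(x_0)=0$.

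The key preparatory step is to put the varying targets on a common footing. Realising each target as $(\R^n,N_k)$ for a norm $N_k$ and precomposing with the linear isometry carrying the John ellipsoid of the unit ball of $N_k$ onto the standard Euclidean ball, I may assume that every $N_k$ satisfies $\frac{1}{\sqrt n}\,\lvert v\rvert\le N_k(v)\le \lvert v\rvert$ for all $v\in\R^n$, where $\lvert\cdot\rvert$ is a fixed Euclidean norm. This normalisation does not change the bilipschitz constants, and it has two payoffs. First, the $N_k$ are uniformly bounded and $1$-Lipschitz on the Euclidean unit sphere, so by Ascoli--Arzel\`a a subsequence converges uniformly there; extending by homogeneity produces a limit $N_\infty$ which, inheriting the two-sided bound, is again a genuine norm on all of $\R^n$. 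Second, since $f_k(x_0)=0$ and $N_k(f_k(x_i))\le (1+\tfrac1k)\,(h\circ d)(x_i,x_0)$, the bound $N_k\ge\frac{1}{\sqrt n}\lvert\cdot\rvert$ forces $\lvert f_k(x_i)\rvert\le 2\sqrt n\,(h\circ d)(x_i,x_0)$, a constant independent of $k$, so for each fixed $i$ the points $f_k(x_i)$ lie in a fixed Euclidean ball.

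With pointwise precompactness in hand on the countable index set, a diagonal extraction (again the mechanism of Ascoli--Arzel\`a) produces a single subsequence along which $N_k\to N_\infty$ and $f_k(x_i)\to y_i\in\R^n$ for every $i$. I would then verify that $g(x_i):=y_i$ is an isometric embedding of the $h$-snowflake of $\{x_i\}_i$ into $(\R^n,N_\infty)$: since $f_k(x_i)-f_k(x_j)$ stays in a fixed ball, uniform convergence of $N_k$ to $N_\infty$ on that ball gives $N_k(f_k(x_i)-f_k(x_j))\to N_\infty(y_i-y_j)$, while the bilipschitz inequalities $\frac{1}{1+1/k}(h\circ d)(x_i,x_j)\le N_k(f_k(x_i)-f_k(x_j))\le (1+\tfrac1k)(h\circ d)(x_i,x_j)$ squeeze this limit to exactly $(h\circ d)(x_i,x_j)$. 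As $\{x_i\}_i$ is infinite, this contradicts Theorem~\ref{thm:main1}.

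I expect the only genuine obstacle to be the control of the varying target spaces: a priori the norms $N_k$ could degenerate in the limit (the unit ball collapsing onto a lower-dimensional set, or escaping to infinity), in which case $N_\infty$ would fail to be an $n$-dimensional norm and the contradiction would evaporate. Forcing all the $N_k$ into John position is precisely what rules this out, simultaneously delivering compactness of the family of norms and non-degeneracy of $N_\infty$; everything else is the routine Ascoli--Arzel\`a and diagonal bookkeeping the statement alludes to.
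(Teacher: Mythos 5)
Your argument is correct and is exactly the compactness/Ascoli--Arzel\`a argument the paper invokes (the paper states the corollary as an immediate consequence of Theorem~\ref{thm:main1} ``by Ascoli--Arzel\`a'' without writing out the details). Your write-up supplies precisely the missing details --- John position to keep the limiting norm non-degenerate, pointwise boundedness of the images, diagonal extraction, and the squeeze to an isometry --- and correctly reduces to Theorem~\ref{thm:main1} on a countably infinite subset.
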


Theorem \ref{thm:main} shows that there is a bound (depending on $\alpha$ and $n$) on the cardinality of a metric space whose $\alpha$-snowflake admits an isometric embedding into $\R^n$.
The next easy result says that any finite metric space can be isometrically embedded in some Euclidean space after some $\alpha$-snowflaking.
\begin{proposition}\label{prop:existence}
If $(X,d)$ is a finite metric space of cardinality $n$, then some $\alpha$-snowflake of $(X,d)$ admits an isometric embedding into the Euclidean space 
 $\R^n$.
\end{proposition}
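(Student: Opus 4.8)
The plan is to reduce the statement to the classical linear-algebra criterion of Schoenberg characterizing finite subsets of Euclidean space, and then to run a perturbation argument centered at the uniform (regular simplex) metric, which is the $\alpha \to 0$ limit of the $\alpha$-snowflakes.

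First I would recall the reformulation of isometric embeddability into Euclidean space. Fix an enumeration $X = \{x_1, \dots, x_n\}$ and, for a metric $\rho$ on $X$, form the symmetric $(n-1)\times(n-1)$ Gram matrix $G(\rho)$ with entries
$$ G(\rho)_{ij} = \tfrac12\left( \rho(x_1,x_i)^2 + \rho(x_1,x_j)^2 - \rho(x_i,x_j)^2\right), \qquad 2 \le i,j \le n.$$
A standard computation shows that $(X,\rho)$ admits an isometric embedding into $\R^{n-1}$ (sending $x_1$ to the origin) if and only if $G(\rho)$ is positive semidefinite: from a positive semidefinite $G(\rho)$ one produces vectors $v_2,\dots,v_n \in \R^{n-1}$ with $\langle v_i, v_j\rangle = G(\rho)_{ij}$, sets $v_1 = 0$, and checks $|v_i - v_j| = \rho(x_i,x_j)$. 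This is exactly Schoenberg's criterion for $\rho^2$ being of negative type, specialized to a finite set. Since $\R^{n-1} \subset \R^n$, it suffices to make $G(d^\alpha)$ positive semidefinite for some $\alpha \in (0,1)$.

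Next I would identify the limiting metric. For fixed distinct points $x_i, x_j$ we have $d(x_i,x_j)^\alpha \to 1$ as $\alpha \to 0^+$, and because $X$ is finite this convergence is uniform over the finitely many pairs. Hence $d^\alpha$ converges entrywise to the uniform metric $\rho_0$ with $\rho_0(x_i,x_j) = 1$ for all $i \ne j$, the metric of the vertices of a regular $(n-1)$-simplex. Substituting $\rho_0$ into the formula gives $G(\rho_0) = \tfrac12(I + J)$, where $I$ is the identity and $J$ the all-ones matrix of size $n-1$; its eigenvalues are $n/2$ (once) and $1/2$ (with multiplicity $n-2$), so $G(\rho_0)$ is \emph{strictly} positive definite.

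Finally I would invoke continuity. The entries of $G(d^\alpha)$ depend continuously on $\alpha$ and converge to those of $G(\rho_0)$ as $\alpha \to 0^+$, and the set of positive-definite symmetric matrices is open. Therefore $G(d^\alpha)$ is positive definite, in particular positive semidefinite, for all sufficiently small $\alpha \in (0,1)$, and for such $\alpha$ the space $(X, d^\alpha)$ embeds isometrically into $\R^{n-1} \subseteq \R^n$. The only genuine subtlety is arranging that the limiting Gram matrix is strictly, rather than merely, definite, so that an open neighborhood of embeddable metrics is available around it; the explicit simplex computation settles precisely this point, while the finiteness of $X$ is what upgrades pointwise to uniform convergence and keeps the matrices of a fixed finite size throughout.
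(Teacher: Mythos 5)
Your proof is correct, but it reaches the conclusion by a genuinely different route than the paper. Both arguments share the same starting observation --- that $d^\alpha$ converges uniformly to the regular-simplex metric $\rho_0 \equiv 1$ as $\alpha \to 0^+$, so it suffices to show that every metric sufficiently close to $\rho_0$ embeds isometrically --- but they differ in how they establish that open neighborhood of embeddable metrics. The paper works directly with configurations of points: it considers the map $F(q_1,\dots,q_n) = \bigl(\|q_i-q_j\|^2\bigr)_{i<j}$, computes its differential at the regular simplex $\mathbf{p} = (\tfrac{1}{\sqrt2}e_1,\dots,\tfrac{1}{\sqrt2}e_n)$, checks that it is surjective onto the space of upper-triangular data, and invokes the inverse function theorem to conclude that the image of $F$ contains a neighborhood of $(1)_{ij}$. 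You instead pass to Schoenberg's criterion, reducing embeddability to positive semidefiniteness of the Gram matrix $G(\rho)$, compute $G(\rho_0) = \tfrac12(I+J)$ with eigenvalues $n/2$ and $1/2$, and use openness of the cone of positive definite matrices. Your computation checks out (the eigenvalues are as claimed, so $G(\rho_0)$ is strictly definite, which is exactly the point that makes the perturbation work), and your route is arguably cleaner: it replaces the rank computation for $(dF)_{\mathbf{p}}$ with a one-line spectral calculation, and it lands the embedding in $\R^{n-1}$ rather than $\R^n$, which is slightly sharper than the statement requires. What the paper's approach buys in exchange is self-containedness --- it never needs to cite or reprove the Gram-matrix characterization of Euclidean embeddability --- and its Claim A (stability of prescribed near-unit distances under perturbation) is stated in a form that is reusable without reference to positive definiteness.
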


Combining Theorem~\ref{thm:main} and Proposition~\ref{prop:existence} we have:\begin{corollary} 
Given a metric space $(X,d)$,
there is an $\alpha$-snowflake of $(X,d)$ that isometrically embedds into some finite-dimensional normed space if and only if $X$ is finite.
\end{corollary}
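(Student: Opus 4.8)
The plan is to read the corollary as the conjunction of two implications and to dispatch each one by invoking a result already established above, so that the content of this statement is entirely a matter of correctly combining Proposition~\ref{prop:existence} and Theorem~\ref{thm:main}.

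For the direction asserting that finiteness of $X$ is sufficient, I would suppose $X$ is finite, say of cardinality $n$. Then Proposition~\ref{prop:existence} produces some $\alpha \in (0,1)$ whose associated $\alpha$-snowflake $(X, d^\alpha)$ admits an isometric embedding into the Euclidean space $\R^n$. Since $\R^n$ is a finite-dimensional normed space, this furnishes an $\alpha$-snowflake of the required type, and this direction is complete with no further work.

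For the converse, I would argue directly: assume there exist $\alpha \in (0,1)$ and a finite-dimensional normed space $V$, of dimension $n$ say, together with an isometric embedding $(X, d^\alpha) \hookrightarrow V$. The key point is that, although $\alpha$ is only existentially quantified in the statement, we are free to fix the particular $\alpha$ and the particular dimension $n$ that witness this embedding. With $n$ and $\alpha$ now fixed, Theorem~\ref{thm:main} supplies an integer $N = N(n,\alpha)$ with the property that no metric space of cardinality at least $N$ has an $\alpha$-snowflake embedding into any $n$-dimensional normed space. Since $(X, d^\alpha)$ does embed into the $n$-dimensional space $V$, the cardinality of $X$ must be strictly less than $N$; in particular $X$ is finite.

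The only subtlety worth flagging is the interaction of quantifiers: Theorem~\ref{thm:main} yields a bound $N$ depending on both $n$ and $\alpha$, and there is no uniform bound valid simultaneously for all $\alpha \in (0,1)$, since the snowflake exponent realizing an embedding may vary with $X$. But this causes no difficulty here, because we only ever apply the theorem to the single pair $(n,\alpha)$ extracted from the hypothesized embedding. Thus both implications reduce cleanly to the cited results, and the corollary follows at once.
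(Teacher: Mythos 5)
Your proposal is correct and matches the paper's approach exactly: the paper derives this corollary precisely by combining Theorem~\ref{thm:main} (for the bound forcing finiteness) with Proposition~\ref{prop:existence} (for embedding a finite space after suitable snowflaking). Your careful handling of the quantifier order --- fixing the witnessing pair $(n,\alpha)$ before invoking Theorem~\ref{thm:main} --- is the right reading and is exactly what the paper's one-line derivation implicitly relies on.
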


\begin{proof}[Proof of Proposition \ref{prop:existence}]
We show that $(X,d^\alpha)$ admits an isometric embedding into $\R^n$ when $\alpha$ is sufficiently close to $1$.
In fact we show that there is an $\eps > 0$ such that if $d'$ is any metric on $X$ satisfying $1 - \epsilon < d'(x,y) < 1 +\epsilon$ for all distinct $x,y \in X$ then $(X,d')$ admits an isometric embedding into $\R^n$.
The proposition follows as $d(x,y)^\alpha \to 1$ as $\alpha \to 0$ for any distinct $x,y \in X$.

Consider the points $p_j:= \tfrac{1}{\sqrt{2}} e_j$ where $e_1, \ldots, e_n$ is the canonical basis of $\R^n$.
Thus $d(q_i,q_j) = 1$ for all distinct $1 \leq i,j \leq n$.
The proposition is thus a direct consequence of the following claim:

{\bf Claim A.} There is an $\eps>0$ such that if $\rho_{ij}\in \R$, with $1\leq i<j\leq n$ are such that
$|\rho_{ij} - 1 | < \eps$, then there exist $q_1, \ldots, q_n\in \R^n$ such that
$\norm{q_i-q_j} = \rho_{ij}$ for all $1 \leq i < j \leq n$.

To show the claim, consider
the vector subspace $U$ of $\R^{n\times n}
$
defined by the upper-triangular matrices, i.e., 
the elements of $ U$ are elements of the form $  (a_{ij})_{ij}$ with 
$1\leq i<j\leq n$ and $a_{ij}\in \R$.
Let
$F:\R^{n\times n} \to U$ be given by
$$F(q_1, \ldots, q_n) := (\norm{q_i-q_j}^2)_{ij} \quad \text{ for }\,q_1, \ldots, q_n\in \R^n.$$
Denote by
${\bf p}$ the vector of vectors 
${\bf p}:= (\tfrac{1}{\sqrt{2}} e_1, \ldots, \tfrac{1}{\sqrt{2}} e_n)$.
Notice that
$F({\bf p})=(1)_{ij}$.

Claim A holds if and only if $F(\bf p)$ lies in the interior of the image of $F$.
Thus it suffices to show that the differential $(dF)_{\bf p}$ has maximal rank and apply the inverse function theorem.

Denote by $q_l^k$ the $k$-th component of an $n$-vector $q_l$.
Notice that $e_l^k= \delta_l^k$, where $\delta_l^k$ is the 
Kronecker symbol.
Set $E_{lk} = (\delta_i^l \delta_j^k)_{ij}$ Notice that
$\{E_{lk}, 1\leq i<j\leq n\}$ span the set $U$.
Let us show that any $E_{lk}$ is in the image of $(dF)_{\bf p}$.
Fix $l$ and $k$ so that 
$1\leq l<k\leq n$.
By direct calculation,
\begin{eqnarray*}
\dfrac{\partial F( {\bf q})}{\partial q_l^k}  = 
\left( 2 \langle q_i -q_j , 
\delta_i^l e_k - \delta_j^l e_k\rangle \right)_{ij}
 = 
\left( 2 ( q_i^k -q_j^k )
(\delta_i^l - \delta_j^l ) \right)_{ij}.
\end{eqnarray*}
Evaluating at ${\bf p}$, we get
$$
\left.\dfrac{\partial F( {\bf q})}{\partial q_l^k} \right|_{{\bf q}={\bf p}} = 
\left( 2 \frac{1}{\sqrt{2}}( \delta_i^k -\delta_j^k )
(\delta_i^l - \delta_j^l ) \right)_{ij}.
$$
Since $l\neq k$, we have 
$ \delta_i^k\delta_i^l\equiv 0$ and 
$\delta_j^k\delta_j^l \equiv 0$.
Since $l< k$, and $i<j$, we have 
$ \delta_i^k\delta_j^l \equiv 0$.
Hence 
$$
\left.\dfrac{\partial F( {\bf q})}{\partial q_l^k} \right|_{{\bf q}={\bf p}} = 
\left(  \sqrt{2} ( -\delta_j^k )
\delta_i^l \right)_{ij} = - \sqrt{2}  E_{lk}.
$$
This concludes the proof of Claim A.
\end{proof}

\section{Proofs of the main results}

In the next subsection we first give two simple geometric lemmas which allow us to prove our results for non-euclidean normed vector spaces. The proofs of Theorem~\ref{thm:main} and Theorem~\ref{thm:main1} rely on the Ramsey-theoretic fact that given sufficiently many points in $\R^n$
there must be a triple that forms as large angle as we want. We recall this result at the end of the subsection in Lemma~\ref{lma:angles}.
In the following two subsections we then prove Theorem~\ref{thm:main} and Theorem~\ref{thm:main1}.
In the final subsection we prove that if $h$ is a degenerate snowflaking function then there exists an infinite metric space whose $h$-snowflake isometrically embeds into $2$-dimensional Euclidean space.

\subsection{Geometric lemmas}

Throughout this section $V$ is an $n$-dimensional normed linear space with norm $\|\cdot\|$. 
For the remainder of this section we fix   an inner product $\langle\cdot,\cdot\rangle$ on $V$ for which John ellipsoid property holds. Namely, we have
\begin{equation}\label{John:eq}
B \subset B_{V} \subset \sqrt{n}B,
\end{equation}
where $B_{V}$ is the $\|\cdot\|$-unit ball and $B$ is the the unit ball in the $l_2$ metric associated to the inner product $\langle\cdot,\cdot\rangle$. For $u,v \in V$ we denote the length given by the inner product by $uv := \sqrt{\langle u-v,u-v\rangle}$.

Given two vectors $u,v \in V$ let $\angle(u,v)$ be their angle with respect to the above inner product $\langle\cdot,\cdot\rangle$:
\[\angle(u,v) := \arccos \dfrac{ \langle u,v\rangle}{ \langle u,u\rangle  \langle v,v\rangle}.\]
Given three points $x,y,z
\in V$ we set
$\angle_y(x,z) := \angle (x-y, z-y)$.

Let $x,y,z \in V$ and let  $z' := x + \langle z-x,y-x \rangle(y-x)$ be the projection of $z$ onto the line through $x$ and $y$.

\begin{lemma}\label{lma:closetoEuclidean}
There exist constants $    \varepsilon , C \in (0, \infty)$ 
 depending only on $n$ such that 
 if $\angle_x(y,z) < \varepsilon$ then
 \[
  \big|\|x-z\| - \|x-z'\|\big| \le C\|x-z'\|\angle_x(y,z).
 \]
\end{lemma}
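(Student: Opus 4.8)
The plan is to estimate everything in terms of the auxiliary Euclidean length $uv = \sqrt{\langle u-v,u-v\rangle}$ and then transfer the bounds back to $\|\cdot\|$ using the two John inclusions in \eqref{John:eq}. The starting point is the reverse triangle inequality for $\|\cdot\|$: since $(x-z)-(x-z') = z'-z$, we have
\[
\big|\|x-z\|-\|x-z'\|\big| \le \|z-z'\|.
\]
Hence it suffices to bound $\|z-z'\|$ by $C\|x-z'\|\angle_x(y,z)$, and the problem is reduced to controlling the single quantity $\|z-z'\|$.

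Next I would pass to the Euclidean geometry of $\langle\cdot,\cdot\rangle$, where the clean orthogonal picture is available. By construction $z'$ is the $\langle\cdot,\cdot\rangle$-orthogonal projection of $z$ onto the line through $x$ and $y$, so $z-z'$ is orthogonal to $z'-x$ and the triangle $xz'z$ has a right angle at $z'$. Writing $\theta := \angle_x(y,z)$ and choosing $\varepsilon$ small enough that $\theta<\pi/2$, the foot $z'$ lies on the ray from $x$ through $y$, so $\theta$ is genuinely the acute angle at $x$ in this right triangle and the elementary relation
\[
z'z = xz'\,\tan\theta
\]
holds, where $z'z$ and $xz'$ denote the Euclidean lengths of the two legs.

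Finally I would convert between the two metrics using \eqref{John:eq}. The inclusion $B\subset B_V$ says that the $\langle\cdot,\cdot\rangle$-length dominates $\|\cdot\|$, giving $\|z-z'\|\le z'z$; the inclusion $B_V\subset\sqrt n\,B$ says that the $\langle\cdot,\cdot\rangle$-length is at most $\sqrt n$ times $\|\cdot\|$, giving $xz'\le\sqrt n\,\|x-z'\|$. Combining these with $z'z=xz'\tan\theta$ yields $\|z-z'\|\le\sqrt n\,\|x-z'\|\tan\theta$, and shrinking $\varepsilon$ so that $\tan\theta\le 2\theta$ on $[0,\varepsilon)$ gives the claim with $C=2\sqrt n$; both $\varepsilon$ and $C$ depend only on $n$. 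There is no serious obstacle here: the only points requiring care are orienting the two John inequalities correctly---one needs the Euclidean length as an upper bound for $\|z-z'\|$ but as a lower bound (up to $\sqrt n$) for the term $\|x-z'\|$ on the right---and checking that the smallness of $\theta$ both legitimizes the $\tan\theta$ relation and lets one replace $\tan\theta$ by a linear term.
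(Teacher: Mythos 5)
Your proof is correct, and it takes a genuinely different and noticeably simpler route than the paper's. The paper works on the unit sphere of $\|\cdot\|$: it uses the John ellipsoid to produce interior and exterior cones at a boundary point, constructs auxiliary points $s,t,w$ with $\|x-w\|=\|x-z'\|$, and estimates the ratio $\|x-z\|/\|x-z'\|$ through a chain of comparisons involving those cones. You instead collapse the whole problem to the single quantity $\|z-z'\|$ via the reverse triangle inequality $\big|\|x-z\|-\|x-z'\|\big|\le\|z-z'\|$, compute $z'z=xz'\tan\theta$ exactly in the auxiliary Euclidean structure (legitimate since $z'$ is the $\langle\cdot,\cdot\rangle$-orthogonal projection and $\theta<\pi/2$ puts $z'$ on the ray from $x$ through $y$), and then apply the two John inclusions once each, in the correct directions: $\|z-z'\|\le z'z$ from $B\subset B_V$ and $xz'\le\sqrt n\,\|x-z'\|$ from $B_V\subset\sqrt n\,B$. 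This yields the two-sided estimate in one stroke with the explicit constants $C=2\sqrt n$ and an $\varepsilon$ that is in fact absolute (any $\varepsilon$ with $\tan\theta\le 2\theta$ on $[0,\varepsilon)$), whereas the paper's constant $C_\theta=(4\cos\theta+2)/\sin\theta$ depends on $n$ only implicitly through the cone angle, and its argument quietly assumes $\|x-z'\|\le\|x-z\|$ for the lower bound. Your version is cleaner and fully sufficient for the way the lemma is used in the proofs of Theorems~\ref{thm:main} and~\ref{thm:main1}.
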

\begin{proof}
By the John Ellipsoid Theorem, \eqref{John:eq}, there is an angle $\theta$ and a height $h$ only depending on $n$ such that
for any point $p$ on the sphere with respect to $\norm{\cdot}$. 
The cone $C^- (p, \theta, \ell, 0)$ at $p$ with opening $\theta$ and  height $\ell$ in the direction of $0$ is inside the ball, whereas
the  cone $C^+ (p, \theta, \ell, p)$ at $p$ with opening $\theta$ and  height $\ell$ in the direction of $p$ is outside the ball.

The inequality that we need to prove is translation and dilation invariant. So we assume that $x=0$ and that $z'$ lies on the unit sphere.
Let $\beta := \angle_x(y,z)$. We shall require $\beta  < \varepsilon < \frac\pi2$ for a small enough $\eps$.
Let $t, s, t_1, t_2, t_3$ be as in the picture.
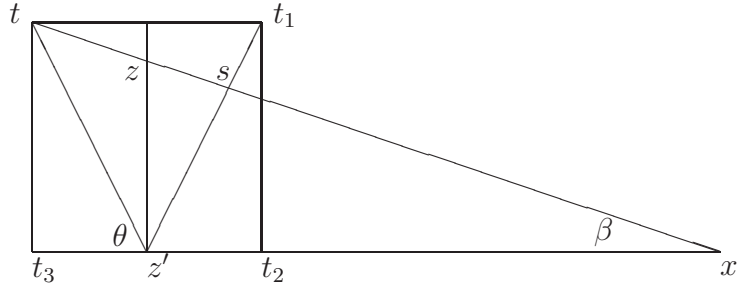
\begin{figure}[!h]
  \centering
  \setlength{\unitlength}{0.1\textwidth}
  \begin{picture}(5,1.9)
  
   \put(0,2){\line(3,-1){6}}   
       \put(0,0){\line(1,0){6}}  
        \put(0,2){\line(0,-2){2}}  
                        \put(0,2){\line(1,0){2}}  
                            \put(2,2){\line(0,-2){2}}  
                            \put(1,2){\line(0,-2){2}}  
                                \put(6,-0.2){$x$}
                                                                \put(4.9,0.1){$\beta$}
           \put(1,0){\line(1,2){1}} 
           \put(1,0){\line(-1,2){1}} 
    \put(2,-0.22){$t_2$}
        \put(1,-0.22){$z'$}
                \put(0.7,0.05){$\theta$}
                \put(0,-0.22){$t_3$}
                    \put(-0.2,2){$t$}
                                        \put(2.1,2){$t_1$}
    \put(0.8,1.5){$z$}
        \put(1.6,1.5){$s$}
          \end{picture} 
  \caption{The point $w$ is between $t$ and $s$.}
\end{figure}

If $\eps$ is small enough, then
$\sin \theta \cos \beta - 2 \cos \theta \sin \beta > \half\sin\theta.$
Setting $r:= tz'$ we have
$$t_2y = t_3y- t_3t_2 = r \dfrac{\sin\theta}{\sin \beta} \cos \beta - 2 r \cos \theta> \dfrac{r\sin\theta}{2\sin\beta}.$$
Also, if $\eps$ is small enough, then the point $t$ is in $C^+ (p, \theta, \ell, p)$ and so $s\in C^- (p, \theta, \ell, 0)$.
In particular, between $s$ and $t$ there is a point $w$ such that
$\|x-w\| = \|x-z'\|$.
 
We conclude using, in order, the definition of $w$, the fact that on a line any two norms are a multiple of each other, the triangle inequality, the properties of $s$ and $t$, and the previous bound:
\begin{eqnarray*}
1& \leq &\dfrac{\|x-z\| }{ \|x-z'\|} =
\dfrac{\|x-z\| }{ \|x-w\|} 
= 
\dfrac{xz }{ xw} 
\leq 
\dfrac{xw +wz }{ xw}
=
1+ \dfrac{wz }{ xw} 
\leq  1+ \dfrac{ts }{ sy} \\
&\leq & 1+ \dfrac{tt_1 + tz' }{ t_2y}
\leq  1+ \dfrac{ 2r \cos \theta + r }{  \frac{r}{2}\sin\theta / \sin \beta}
\leq  1+   C_\theta \sin \beta, \\
\end{eqnarray*}
where $C_\theta  =  ( 4  \cos \theta +2) /  \sin\theta    $.
\end{proof}

The following bound is another easy consequence of the John Ellipsoid theorem.
\begin{lemma}\label{lemma234124}
There exists a constant $K>0$ depending only on $n$ such that 
when $  \angle_x(y,z),\angle_y(x,z) \in(0, \frac{\pi}{4})$ we have
 \begin{equation}\label{eq:anglecomparison}
  K^{-1}\frac{\|z'-y\|}{\|z'-x\|} \le \frac{\angle_x(y,z)}{\angle_y(x,z)} \le K \frac{\|z'-y\|}{\|z'-x\|}.
 \end{equation}

\end{lemma}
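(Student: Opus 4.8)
The plan is to reduce the statement to a right triangle computation in the inner-product (Euclidean) geometry and then absorb the discrepancy between the inner product and the norm using the John inclusions \eqref{John:eq}.

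First I would exploit that both angles are acute. Since $\angle_x(y,z),\angle_y(x,z)\in(0,\frac\pi4)\subset(0,\frac\pi2)$, the point $z$ lies off the line through $x$ and $y$, and its orthogonal projection $z'$ onto that line lies strictly between $x$ and $y$; in particular the ray from $x$ through $z'$ coincides with the ray from $x$ through $y$, and likewise at $y$. Writing $h:=zz'$ for the inner-product height, the two right triangles with legs along the line $xy$ give
\[
\tan\angle_x(y,z)=\frac{h}{xz'},\qquad \tan\angle_y(x,z)=\frac{h}{yz'},
\]
where all lengths are inner-product lengths in the sense $uv=\sqrt{\langle u-v,u-v\rangle}$. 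Dividing, the height cancels and
\[
\frac{\tan\angle_x(y,z)}{\tan\angle_y(x,z)}=\frac{yz'}{xz'}.
\]

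Next I would pass from tangents back to the angles themselves. The elementary inequality $t\le\tan t\le\frac4\pi t$, valid for $t\in(0,\frac\pi4]$ (the lower bound is standard and the upper bound follows from convexity of $\tan$ together with $\tan 0=0$), shows that $\frac{t}{\tan t}\in[\frac\pi4,1]$ on this range. Applying it to both angles gives that $\frac{\angle_x(y,z)}{\angle_y(x,z)}$ and the ratio $\frac{yz'}{xz'}$ agree up to the factor $\frac4\pi$:
\[
\frac\pi4\,\frac{yz'}{xz'}\le\frac{\angle_x(y,z)}{\angle_y(x,z)}\le\frac4\pi\,\frac{yz'}{xz'}.
\]
Finally I would replace the inner-product lengths $yz'$, $xz'$ by the norm lengths $\|z'-y\|$, $\|z'-x\|$. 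The inclusions \eqref{John:eq} translate into $\frac{1}{\sqrt n}\,(uv)\le\|u-v\|\le uv$ for all $u,v\in V$, so the quotients $\frac{\|z'-y\|}{\|z'-x\|}$ and $\frac{yz'}{xz'}$ differ by at most a factor $\sqrt n$. Combining this with the previous display yields \eqref{eq:anglecomparison} with $K=\frac{4\sqrt n}{\pi}$, a constant depending only on $n$.

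The argument is elementary, so there is no serious obstacle; the two points that genuinely require care are in the first and last steps. In the first step one must check that the acuteness hypotheses force $z'$ to lie between $x$ and $y$, so that the tangent relations involve $\angle_x(y,z)$ and $\angle_y(x,z)$ themselves rather than their supplements. In the last step one must keep straight the two directions of the John inequalities, since it is precisely the ratio (not the individual lengths) that is controlled, and the two norm/inner-product comparisons enter with opposite effect in numerator and denominator.
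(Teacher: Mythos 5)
Your argument is correct and is essentially the paper's own proof: both reduce to the identity $\tan\angle_x(y,z)/\tan\angle_y(x,z)=yz'/xz'$ from the two right triangles at the foot $z'$, compare angles to tangents on $(0,\tfrac{\pi}{4})$ (you use the sharp factor $\tfrac{4}{\pi}$ where the paper uses $2$), and then apply the John inclusions \eqref{John:eq} to trade the inner-product lengths for norm lengths at the cost of a factor $\sqrt{n}$. Your extra care about $z'$ lying strictly between $x$ and $y$ and about the directions of the two John inequalities is exactly the right bookkeeping, and your constant $K=\tfrac{4\sqrt{n}}{\pi}$ is a valid (slightly better) choice than the paper's implicit $2\sqrt{n}$.
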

\begin{proof}
 Set $\alpha =   {\angle_x(y,z)}$, and $ \beta:={\angle_y(x,z)}$. By \eqref{John:eq}
 \begin{eqnarray*}
 \frac{\alpha}{\beta} &\le& \frac{\tan\alpha}{\half\tan\beta} 
  =2 \frac{ zz'/xz' }{  zz'/yz'} 
  =2 \frac{  yz'} { xz' }
 \le 2\frac{\sqrt{n} \|z'-y\|}{\|z'-x\|}.
 \end{eqnarray*}
By symmetry, we get the other inequality.
 \end{proof}
 
Our arguments rely on the following Ramsey-theoretic result. Explicit bounds on the number of points that one can have in $\R^n$ without forming an angle larger than a given bound can be found in \cite{MR841305}. A proof of Lemma~\ref{lma:angles} can also be found in \cite{MR2737262}.
 
 \begin{lemma}\label{lma:angles}
 For any $n \in \N$ and $0 < \beta < \pi$ there is an $N \in \N$ such that if $S \subseteq \mathbb{R}^n$ has cardinality at least $N$ then there are distinct $x,y,z \in S$ such that $\beta \leq \angle(xyz) \leq \pi$.
\end{lemma}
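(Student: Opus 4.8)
The plan is to combine a compactness argument on the sphere of directions with Ramsey's theorem, reducing the statement to the elementary observation that three nearly collinear points, with one lying between the other two, subtend an angle close to $\pi$ at the middle point. It is convenient to argue by contrapositive: I want to show that if $S \subseteq \R^n$ contains no triple with angle at least $\beta$, then $|S|$ is bounded by a constant depending only on $n$ and $\beta$.

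First I would fix $\delta := (\pi - \beta)/2 > 0$ and, using compactness of the unit sphere $S^{n-1} \subseteq \R^n$, cover it by finitely many spherical caps $U_1, \dots, U_R$, each of angular radius at most $\delta$; here $R = R(n,\beta)$. Next I would order the points of $S$ as $p_1, \dots, p_{|S|}$ and color the edges of the complete graph on these vertices by assigning to $\{p_i, p_j\}$ with $i < j$ the index of a cap containing the \emph{oriented} direction $(p_j - p_i)/\|p_j - p_i\|$. This is an $R$-coloring of the edges of a complete graph, so by Ramsey's theorem there is an $N = N(n,\beta)$ (a multicolor Ramsey number for triangles) such that $|S| \ge N$ forces a monochromatic triangle $q_1, q_2, q_3$ with increasing indices.

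For such a triangle all of the relevant oriented directions lie within angle $\delta$ of a common axis $u$. In particular the direction from $q_2$ to $q_1$ is the reverse of $(q_2-q_1)/\|q_2-q_1\|$ and hence lies within $\delta$ of $-u$, while the direction from $q_2$ to $q_3$ lies within $\delta$ of $u$; it is precisely the use of oriented directions that forces $q_2$ to be the geometrically middle point. By the triangle inequality for the angular metric on the sphere,
\[
 \angle_{q_2}(q_1,q_3) \ge \angle(-u,u) - 2\delta = \pi - 2\delta = \beta,
\]
which produces the desired triple. I expect no serious obstacle: the argument is a standard marriage of a finite covering of the space of directions with Ramsey's theorem, and the only point requiring genuine care is the bookkeeping that the oriented coloring makes the middle-indexed vertex geometrically central, so that its angle is near $\pi$ rather than near $0$. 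This recovers the bounds and proofs contained in the cited references \cite{MR841305, MR2737262}.
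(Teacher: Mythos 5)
Your proof is correct: the covering of the sphere of directions by caps of angular radius $(\pi-\beta)/2$, the multicolor Ramsey argument on oriented directions, and the angular triangle inequality at the middle-indexed vertex together give exactly the desired triple, and the use of oriented directions correctly forces the large angle to occur at the geometrically central point. Note that the paper does not prove this lemma at all --- it only cites \cite{MR841305} and \cite{MR2737262} --- and your argument is precisely the standard one found in those references, so there is nothing to reconcile.
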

 


\subsection{Proof of Theorem~\ref{thm:main}}\label{sec:thm:main}

The proof of Theorem~\ref{thm:main} as well as the proof of Theorem~\ref{thm:main1} combines two observations: snowflaking forbids the formation of large angles whereas the fact that we have many points forces such angles to exist.
In the proof of Theorem~\ref{thm:main} the special form of the snowflaking function allows us to directly prove a bound on the cardinality of the snowflaked space that can be embedded in the normed vector space. 

%

\begin{proof}[Proof of Theorem~\ref{thm:main}]
Fix $n$ and $\alpha$ and let $C,\epsilon, K$ be given by Lemma~\ref{lma:closetoEuclidean} and Lemma~\ref{lemma234124}.
 Now take $\theta < \min\{\eps,\frac{\pi}{4},\frac{2-2^\alpha}{3C(2K+2^\alpha)}\}$.
 
We let $N \in \N$ be the constant in Lemma~\ref{lma:angles} with $\beta = \pi - \theta$. 
 We suppose towards a contradiction that $(X,d)$ has cardinality $N$ and that there is an isometric embedding $\iota$ of $(X,d^\alpha)$ into an $n$-dimensional normed vector space $(V,\|\cdot\|)$. 
 By Lemma~\ref{lma:angles} there exist three isometrically embedded points $x,y,z \in \iota(X)$ such that
 $\pi - \theta < \angle_z(x,y) \leq \pi$. 
 We declare $\delta_z := \angle_z(x,y).$
 We may assume that $\delta_z \ne \pi$. 
We also declare $\delta_x := \angle_x(y,z)$ and $\delta_y := \angle_y(x,z)$.
 Note that $0 < \delta_x,\delta_y < \theta$.

 Let $z' \in \R^n$ be the orthogonal projection of $z$ on the line passing through $x$ and $y$, i.e. $z' := x + \langle z-x,y-x \rangle(y-x)$.
Lemma~\ref{lma:closetoEuclidean} yields
 \begin{equation}\label{eq:dy}
  \left|\|x-z\| - \|x-z'\|\right| \le C\|x-z'\|\delta_x  
 \end{equation}
 and
 \begin{equation}\label{eq:dx}
  \left|\|y-z\| - \|y-z'\|\right| \le C\|y-z'\|\delta_y.
  \end{equation}
 
 We now estimate
 \[
  (\|y-z\|^\frac{1}{\alpha} + \|z-x\|^\frac{1}{\alpha})^{2\alpha}
 \]
 from above. 
 Subadditivity of $t \mapsto t^\alpha$ yields:
 \begin{align*}
   (\|y-z\|^\frac{1}{\alpha} + \|z-x\|^\frac{1}{\alpha})^{2\alpha} &= ((\|y-z\|^\frac{1}{\alpha} + \|z-x\|^\frac{1}{\alpha})^2)^{\alpha}\\
   &= (\|y-z\|^\frac2\alpha + \|z-x\|^\frac2\alpha + 2\|y-z\|^\frac1\alpha\|z-x\|^\frac1\alpha)^\alpha\\
   & \le \|y-z\|^2 + \|z-x\|^2 + 2^\alpha\|y-z\|\|z-x\|.
 \end{align*}
 Estimating the obtained terms from above using \eqref{eq:dx} and \eqref{eq:dy} we have
 \begin{align*}
  (\|y-z\|^\frac{1}{\alpha} + \|z-x\|^\frac{1}{\alpha})^{2\alpha} & \le 
 \|y-z'\|^2(1+C\delta_y)^2 + \|z'-x\|^2(1+C\delta_x)^2 \\
 & \quad\, + 2^\alpha \|y-z'\|\|z'-x\|(1+C\delta_x)(1+C\delta_y).
 \end{align*}
 Now we use the fact that $C\delta_x < 1$ and Lemma~\ref{lemma234124} to obtain
 \begin{align*}
  \|y-z'\|^2(1+C\delta_y)^2 & \le \|y-z'\|^2(1+3C\delta_y)
  \le \|y-z'\|^2\left(1+3CK\delta_x\frac{\|z'-x\|}{\|z'-y\|}\right)\\
  & \le \|y-z'\|^2 + 3CK\theta\|y-z'\|\|x-z'\|
 \end{align*}
 and similarly
 \[
  \|x-z'\|^2(1+C\delta_x)^2 \le \|x-z'\|^2 + 3CK\theta\|y-z'\|\|x-z'\|.
 \]
 Again by $C\delta_x < 1$ we get $(1+C\delta_x)(1+C\delta_y) < 1+3C\theta$.
 Collecting the estimates together and using the fact that $6CK\theta + 2^\alpha(1+3C\theta) < 2$ we have:
 \begin{align*}
  (\|y-z\|^\frac{1}{\alpha} + \|z-x\|^\frac{1}{\alpha})^{2\alpha} & \le \|y-z'\|^2 + 3CK\theta\|y-z'\|\|x-z'\|\\
  & \quad\, + \|x-z'\|^2 + 3CK\theta\|y-z'\|\|x-z'\|\\
  & \quad\, + 2^\alpha(1+3C\theta) \|y-z'\|\|z'-x\| \\
  & = \|y-z'\|^2 + \|x-z'\|^2\\
  & \quad\, + (6CK\theta + 2^\alpha(1+3C\theta))\|y-z'\|\|z'-x\|\\
  & < \|x-z'\|^2 + \|z'-y\|^2 + 2\|x-z'\|\|z'-y\|\\
   & = (\|x-z'\|+\|z'-y\|)^2 = \|x-y\|^2.
 \end{align*}
%
%
 Therefore
 \begin{align*}
  d(\iota^{-1}(x),\iota^{-1}(y)) & =  \|x-y\|^\frac{1}{\alpha} > \|y-z\|^\frac{1}{\alpha} + \|z-x\|^\frac{1}{\alpha}\\
  & =  d(\iota^{-1}(z),\iota^{-1}(y)) +  d(\iota^{-1}(x),\iota^{-1}(z)).
 \end{align*}
This contradicts the triangle inequality in $(X,d)$.
\end{proof}

\subsection{Proof of Theorem~\ref{thm:main1}}\label{sec:thm:main1}

In the proof of Theorem~\ref{thm:main1} we use the same geometric lemmas (Lemma~\ref{lma:closetoEuclidean} and Lemma~\ref{lemma234124}) as in the proof of Theorem~\ref{thm:main}.
However, in the proof of Theorem~\ref{thm:main1} the choice of a sequence of points giving the contradiction depends not only on the snowflaking function $h$, but also on the first element of the sequence. Therefore no upper bound (depending on $h$ and $n$) on the number of points that can be snowflake embedded can in general be obtained in Theorem \ref{thm:main1}.

\begin{proof}[Proof of Theorem~\ref{thm:main1}]

Suppose to the contrary that $X$ is infinite and that there exists an isometric embedding $\iota \colon (X,h\circ d) \to V$ where $V$ is an $n$-dimensional normed vector space. We divide our proof into two cases.
An infinite bounded subset of $\mathbb{R}^n$ is not discrete, so one of the following holds:
\begin{enumerate}
 \item[(i)] $\iota(X)$ is unbounded; 
   \item[(ii)] $\iota(X)$ is not discrete. 
\end{enumerate}

If (i) holds we will arrive at a contradiction with the condition (S4) of a snowflaking function. If (ii) holds, a contradiction follows with (S3).

\textbf{Case (i): Suppose $\iota(X)$ is unbounded}\\
Observe that (S4) implies that the existence of a function $T \colon \R_{\ge} \to \R_{\ge}$ such 
that for any $t > 0$ and $S \ge T(t)$ we have
\begin{equation}\label{tapio1}
 \frac{t}{h(t)}\frac{h(S)}{S} \le \frac12.
\end{equation}
Combining \eqref{tapio1} with (S1) and (S2) we get 
\begin{equation}\label{tapio2}
 h(S+t) \le h(S) + t\frac{h(S)}{S} = h(S) + \frac{t}{h(t)}\frac{h(S)}{S} h(t)
 \le h(S) + \frac12 h(t).
\end{equation}
Now fix $x_0,x_1 \in X$, $x_0\ne x_1$. Since $(X,d)$ is unbounded, there exists a point $x_2 \in X$ with
 $\angle_{\iota(x_2)}(\iota(x_0), \iota(x_1) )\leq \pi/4$
 and
$d(x_2,x_i) > T(d(x_0,x_1))$ for $i = 0,1$. We continue inductively. Suppose $(x_i)_{i=0}^{N-1} \subset X$ have been chosen.
Now we select $x_N \in X$ satisfying 
\begin{equation}\label{iota_x}
\angle_{\iota(x_N)}(\iota(x_i), \iota(x_j) )\leq \pi/4
\text{  and }
 d(x_i,x_N) > T(d(x_i,x_j)),\quad\forall i,j <N .
 \end{equation}

Let $\epsilon, C , K$ be the constants from Lemma~\ref{lma:closetoEuclidean} and Lemma~\ref{lemma234124}. 
Set 
\begin{equation}\label{eq:delta}
 \delta=\min\{ \eps, \pi/4, \frac{1}{2C(1+K)} \}.
\end{equation}
By Lemma~\ref{lma:angles} there exist 
$x,y,z$ in $\{\iota(x_\ell)\}_{\ell\in \N}$ such that $\angle_{z}(x,y) > \pi - \delta$.
By the condition \eqref{iota_x}, there exist 
$ i, j, k \in \N$ with $k > \max\{i,j\}$ such that
$x=\iota(x_j)$,
$y=\iota(x_k)$,
$z= \iota(x_i)$.
Let $z'$ be the orthogonal projection of $z$ to the line passing through
$x$ and $y$. On the one hand, we have that
\begin{align*}
 h(d(x_j,x_k)) & = \|x-y\| \\
 & = \|x-z'\| + \|z'-y\|\\
(\text{Using  Lemma~\ref{lma:closetoEuclidean}})\quad & \ge \|x - z\| - C\|x-z'\|\angle_{x}(z,y)\\
 & \hspace{2cm} +  \|z - y\| - C\|y-z'\|\angle_{y}(z,x)\\
 (\text{Using  Lemma~\ref{lemma234124}})\quad & \ge \|x - z\| - C\|x-z'\|\angle_{x}(z,y)\\
 & \hspace{2cm} +  \|z - y\| - CK\|x-z'\|\angle_{x}(z,y)\\
 & = \|x - z\| + \|z - y\| - C(1+K)\|x - z'\|\angle_{x}(z,y)\\
  & \ge \|x - z\| + \|z - y\| - C(1+K)\|x - z\|\angle_{x}(z,y)\\
 & = h(d(x_i,x_k)) + (1-C(1+K) \angle_{x}(z,y)) h(d(x_i,x_j))\\
 (\text{Using  that } \angle_{x}(z,y)< \delta)   \quad & > h(d(x_i,x_k)) + \frac12 h(d(x_i,x_j)).
\end{align*}
On the other hand, 
first notice that since 
$h$ is concave and positive, then $h$ has to be weakly increasing.
Secondly, by the definition of the function $T$ we then have that for any three $ i, j, k \in \N$ with $k > \max\{i,j\}$  
\begin{equation}\label{eq:a_bit_shorter}
 h(d(x_j,x_k)) \le h(d(x_i,x_k)+d(x_i,x_j)) \le h(d(x_i,x_k)) + \frac12 h(d(x_i,x_j)).
\end{equation}
Therefore we have a contradiction.


\textbf{Case (ii): Suppose $\iota(X)$ is not discrete}\\
This time we observe that (S3) implies the existence of a function $\tilde T \colon \R_{\ge} \to \R_{\ge}$ such 
that $\tilde T(r) \le r$ for all $r$ and for any $S > 0$ and $0 < t \le \tilde T(S)$ we have
\eqref{tapio1}, and hence \eqref{tapio2}, using (S1) and (S2). 


Let $y$ be an accumulation point of $X$. First we select $x_0 \in X \setminus \{y\}$.
Next we take a radius $r_0 > 0$ so that for all $y_1,y_2 \in B(y,r_0)$ we have both
$\angle_{\iota(x_0)}(\iota(y_1), \iota(y_2) )\leq \pi/4$
 and
$d(y_1,y_2) < \tilde T(d(x_0,y_i))$ for $i = 0,1$. Now we select a point $x_1 \in B(y,r_1) \setminus \{y\}$.
We continue inductively. Suppose $(x_i)_{i=0}^{N-1} \subset X$ have been chosen.
Now we take a radius $r_{N-1} < r_{N-2}$ such that for all $y_1,y_2 \in B(y,r_0)$ we have
\begin{equation}\label{iota_x_new}
\angle_{\iota(x_i)}(\iota(y_1), \iota(y_2) )\leq \pi/4
\text{  and }
 d(y_1,y_2) < \tilde T(d(x_i,y_j)),\quad\forall i <N, j = 1,2.
 \end{equation}
Then we select a point $x_N \in B(y,r_{N-1}) \setminus \{y\}$.

With the points $\{x_i\}$ we now arrive at a contradiction with the same argument as in the case (i).
Let $\delta$ be as in \eqref{eq:delta}. Again by Lemma~\ref{lma:angles} there exist 
$x,y,z$ in $\{\iota(x_\ell)\}_{\ell\in \N}$ such that $\angle_{z}(x,y) > \pi - \delta$,
but this time by the condition \eqref{iota_x_new}, there exist 
$ i, j, k \in \N$ with $k < \min\{i,j\}$ such that
$x=\iota(x_j)$,
$y=\iota(x_k)$,
$z= \iota(x_i)$. Now we continue verbatim the proof in the case (i).
\end{proof}

\subsection{Necessity of (S3) and (S4)}

We end this paper by showing that the conditions (S3) and (S4) of generalized snowflakes are indeed needed for Theorem \ref{thm:main1} to hold.

\begin{proposition}\label{prop:necessity}
 Suppose $h \colon \R_\ge \to \R_\ge$ satisfies (S1) and (S2) but fails to satisfy (S3) or (S4). Then there is an
 infinite metric space $(X,d)$ such that $(X,h \circ d)$ admits an isometric embeddeding into $2$-dimensional Euclidean space.
\end{proposition}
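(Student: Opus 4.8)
The plan is to treat the two degeneracies separately and, in each, to exhibit a single infinite family of points $\{p_k\}_{k\in\N}\subset\R^2$ together with the metric $d:=h^{-1}\circ\|\cdot\|$ pulled back along $\iota(x_k):=p_k$. In the operative regime $h$ is strictly increasing (its slope $h(t)/t$ is monotone and stays positive there), so $h^{-1}$ is well defined and $h(d(x_i,x_j))=\|p_i-p_j\|$ holds by construction; positivity and symmetry of $d$ are immediate. Hence the \emph{entire} content is to choose the $p_k$ so that $d$ obeys the triangle inequality, which then makes $\iota$ an isometry of $(X,h\circ d)$ onto $\{p_k\}\subset\R^2$. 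To set up the two regimes, note that since $t\mapsto h(t)/t$ is non-increasing with $h(0)=0$, the limits $L:=\lim_{t\to\infty}h(t)/t$ and $M:=\lim_{t\to0^+}h(t)/t$ exist; failure of (S4) means $L>0$ and failure of (S3) means $M<\infty$. In either case $h$ is asymptotically linear in the relevant regime (at infinity, resp. at zero), and this is exactly what I would exploit.

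Write $g:=h-Lt\ge 0$ (concave, with $g'(t)\to0$ as $t\to\infty$) in the first case and $g:=Mt-h\ge0$ (convex, with $g'(0^+)=0$) in the second. A short computation rewrites the triangle inequality $d(x_i,x_k)\le d(x_i,x_j)+d(x_j,x_k)$ as a comparison between the Euclidean triangle-defect and the additivity-defect of $g$: with $a,b$ the two shorter Euclidean sides of a triple and $c$ the longest,
\[
 a+b-c \;\ge\; G := \big|\,g(\alpha)+g(\beta)-g(\alpha+\beta)\,\big|,\qquad \alpha:=h^{-1}(a),\ \beta:=h^{-1}(b).
\]
My construction of the $p_k$ is a logarithmic spiral sampled at a fixed angular step, expanding (ratio $>1$) when (S4) fails, so $\|p_k\|\to\infty$, and contracting (ratio $<1$) when (S3) fails, so the $p_k$ cluster at a point. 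The spiral is chosen for two reasons: its self-similarity makes every triple similar to one of the countably many triples $\{p_0,p_a,p_b\}$, reducing the infinitely many shape checks to a controlled family and preventing consecutive triples from degenerating; and the constant angle it makes between radius and tangent keeps triangles uniformly non-degenerate at their own scale, supplying a fixed geometric margin in $a+b-c$.

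The genuinely delicate step, and the main obstacle, is that by Lemma~\ref{lma:angles} any infinite planar set must contain arbitrarily thin triples, for which $a+b-c$ is small; the displayed inequality can survive only because such triples occur exactly where $G$ is even smaller. Quantitatively $G$ is governed by $g'$ evaluated in the operative regime, and the failure of (S4)/(S3) forces $g'\to0$ there (at infinity, resp. at zero). The decisive maneuver is therefore to fix the overall scale of the spiral, a large initial radius when (S4) fails and a small diameter when (S3) fails, so that the governing value of $g'$ drops below the geometric margin set by the spiral angle, yielding $a+b-c\ge G$ uniformly. The cases one must watch are the "scale-mixed" triples pairing the innermost with the outermost points, nearly collinear and hence with small defect; here one checks that the constant radius–tangent angle of the spiral (together with an angular step avoiding near-antipodal directions) again provides enough slack, while $g'$ at the corresponding scale is small. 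Verifying this one uniform estimate closes both cases; the remaining degenerate linear case $h(t)=Lt$ is trivial, since $(X,h\circ d)=(X,L\,d)$ is isometric to any scaled infinite bounded subset of $\R^2$.
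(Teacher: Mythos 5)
Your reduction of the triangle inequality for $d=h^{-1}\circ d_E$ to the comparison $a+b-c\ge G$ with $G=g(\alpha)+g(\beta)-g(\alpha+\beta)$ is correct and is essentially the same comparison the paper carries out via the law of cosines. The genuine gap is in the configuration you choose and in the claim that $G$ is ``governed by $g'$ evaluated in the operative regime.'' In the expanding case $G$ is controlled from below by $g(\min(\alpha,\beta))=\int_0^{\min(\alpha,\beta)}g'$, an integral of $g'$ from the origin, not a local value of $g'$; so making the initial radius large makes $G$ larger, not smaller. Concretely, take $h(t)=t+\sqrt{t}$ (concave, $h(0)=0$, $h(t)/t\to 1$ so (S4) fails), so $g(t)=\sqrt{t}$ and $G\ge\tfrac12\sqrt{\min(\alpha,\beta)}$. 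A spiral $p_k=\rho^k(\cos k\theta_0,\sin k\theta_0)$ with fixed angular step winds infinitely often, so it contains triples $p_i,p_j,p_k$ ($i<j<k$) in which $p_j$ and $p_k$ point in the same (or nearly the same) direction; for such a triple, with $a=\|p_i-p_j\|$, $b=\|p_j-p_k\|$, $c=\|p_i-p_k\|$, one has $a+b-c\le \|p_i\|+\|p_j\|+(\|p_k\|-\|p_j\|)-(\|p_k\|-\|p_i\|)=2\|p_i\|$ (plus a controllable error in the nearly-aligned case), which stays bounded as $j\to\infty$ with $i$ fixed, while $G\ge\tfrac12\sqrt{h^{-1}(a)}\to\infty$. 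Hence $d$ violates the triangle inequality no matter how the overall scale is fixed. If $\theta_0/2\pi$ is rational the alignment is exact; if it is irrational, density of $\{(k-j)\theta_0 \bmod 2\pi\}$ produces alignments accurate enough for the same conclusion. A parallel failure occurs in the contracting case for thin triples anchored at an outermost point, where $g'$ evaluated at the scale of the \emph{longest} side (which does not shrink) is what enters.

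What is missing, and what the paper supplies, is control of the \emph{total} turning of the configuration together with a coupling between angles and scales. The paper places the points on a polygonal path whose turning angles $\alpha_j$ satisfy $\sum_j\alpha_j<\pi/2$, so that for every triple $x_i,x_j,x_k$ with $i<j<k$ the angle at the middle vertex is at most $\pi-\alpha_j$, giving a guaranteed Euclidean defect at that vertex; and it chooses the step lengths $t_j$ increasing fast enough, via the compatibility condition \eqref{eq:tichoice}, that the convergence $h(t)/t\searrow c>0$ has progressed far enough at scale $t_j$ to absorb the additivity defect of $h$ against the margin $\alpha_j$. This per-vertex coupling between an angle that tends to $0$ and a scale that tends to $\infty$ is the essential mechanism, and it cannot be reproduced by a fixed angular step.
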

\begin{proof}
 We only treat the case where (S4) fails. The case where (S3) fails follows in a similar way.
 
We construct a sequence of points $\{x_i\}_{i = 0}^{\infty}$ in $\R^2$ such that $(X,d) := (\{x_i\}_{i = 1}^{\infty},h^{-1}\circ d_E)$
 is a metric space. For this purpose we fix a sequence $\{\alpha_i\}_{i=1}^\infty$ of positive angles such that $\sum_{i=1}^\infty \alpha_i < \frac{\pi}{2}$. 
 Depending on the sequence $\{\alpha_i\}_{i = 1}^{\infty}$ and the function $h$ we construct an increasing sequence $\{t_i\}_{i=1}^\infty$ of positive real numbers which determine the Euclidean distance between $x_{i-1}$ and $x_i$. 
 For notational convenience we let $c(t) = \frac{h(t)}t$. Notice that by assumption $c(t) \searrow c > 0$ as $t \to \infty$.
 This allows us to select for every $i \in \N$ a real number $t_i > 0$ such that for all $s,t \ge t_i$ we have
 \begin{equation}\label{eq:tichoice}
  c(s)(c(s+t)-c) + c(t)(c(s+t)-c) \le 2(c(s)c(t) \cos(\pi-\alpha_i) + c(s+t)^2).
  \end{equation}

 Now, using the sequences $\{\alpha_i\}_{i = 1}^{\infty}$ and $\{t_i\}_{i = 1}^{\infty}$ we define the sequence $\{x_i\}_{i \in \mathbb{N}}$ as follows.
 We set 
 $x_0 := (0,0)$, $x_1 := (t_1,0)$, and  inductively for $n \geq 2$ declare
 \[
  x_n := x_{n-1} + \left(t_n\sin(\sum_{j=1}^{n-1}a_j),t_n\cos(\sum_{j=1}^{n-1}a_j)\right).
 \]

 In order to see that $(X,d)$ is a metric space we need to check that the triangle inequality holds. For this purpose let $0 \le i < j < k$ be three integers. Let $d_E$ be the Euclidean metric.
 The only nontrivial inequality that we have to verify is
 \[
  h^{-1}(d_E(x_i,x_k)) \le   h^{-1}(d_E(x_j,x_k)) +   h^{-1}(d_E(x_j,x_k)).
 \]
 Denoting $s :=   h^{-1}(d_E(x_i,x_j))$ and $t :=   h^{-1}(d_E(x_j,x_k))$ the above inequality is equivalent to
 \begin{equation}\label{eq:wewant}
  d_E(x_i,x_k) \le h(s+t).  
 \end{equation}
 By (S1) and (S2) we can estimate
 \begin{equation}\label{eq:cestim}
  c(t) = \frac{h(t)}t \le \frac{h(t+s) - cs}{t} = c(t+s) + (c(t+s) - c)\frac{s}{t}.
 \end{equation}
 Since $s,t \ge t_j$, by applying the law of cosines, 
 \eqref{eq:cestim} and \eqref{eq:tichoice} we obtain
 \begin{align*}
  d_E(x_i,x_k)^2 - h(s+t)^2 & = d_E(x_i,x_j)^2 + d_E(x_j,x_k)^2 \\
  & \qquad - 2d_E(x_i,x_j)d_E(x_j,x_k)\cos(\angle_{x_j}(x_i,x_k)) - h(s+t)^2\\
  & \le h(s)^2 + h(t)^2 - 2h(s)h(t)\cos(\pi-\alpha_j) - h(s+t)^2\\
  & = s^2c(s)^2 + t^2c(t)^2 - 2stc(s)c(t) \cos(\pi-\alpha_j) - (s+t)^2c(s+t)^2\\
  & = s^2(c(s)^2-c(s+t)^2) + t^2(c(t)^2 - c(s+t)^2)\\
 & \qquad - 2st(c(s)c(t) \cos(\pi-\alpha_j) + c(s+t)^2)\\
 \text{(Using \eqref{eq:cestim})} \qquad  & \le st\big(c(s)(c(s+t)-c) + c(t)(c(s+t)-c)\\
  & \qquad- 2(c(s)c(t) \cos(\pi-\alpha_j) + c(s+t)^2)\big)\\
  \text{(Using \eqref{eq:tichoice})} \qquad & \le 0
 \end{align*}
 and thus \eqref{eq:wewant} holds.
\end{proof}

\begin{remark}\label{rmk:nobound}
 The proof of Proposition \ref{prop:necessity} can be modified to show that there is a snowflake function $h$ (satisfying all the conditions (S1)--(S4)) such that for every $n \in \N$ there exists a metric space $(X_n,d_n)$ with cardinality $n$ so that $(X_n,h\circ d_n)$ embeds 
 isometrically into $2$-dimensional Euclidean space.
 
 Indeed, suppose we are given $n \in \N$ and we have already defined $h$ on $[0,T_{n-1}]$ and that the slope of $h$ at $T_{n - 1}$ is $c_{n-1} > 0$. Then we can define $h$ on an arbitrary long interval $[T_{n-1},T_n]$ as $h(t) = H(T_{n-1}) + c_nt$, where $0 < c_n < c_{n-1}$. By the proof of Proposition \ref{prop:necessity},
 by taking $S$ large enough, there exist $n$ points in $\R^2$ with the distance between any two of them between $h(T)$ and $h(S)$ so that 
 they are an $h$-snowflake of some metric space. We can define $h$ near $0$ so that it satisfies (S3) and if we take $c_n \searrow 0$ then (S4) is also satisfied. Therefore $h$ will have all the required properties.
\end{remark}

 \bibliography{LRW_biblio}
\bibliographystyle{amsalpha}

\end{document}